\documentclass[11pt]{amsart}
\usepackage[
    margin=1in
]{geometry}

\usepackage{amsmath,amssymb,amsxtra,amsthm,hyperref}
\usepackage{tikz}
\usepackage{multicol}
\usetikzlibrary{calc, decorations.markings}
\usepackage{enumitem} 

\newcommand{\condref}[1] {\hyperref[cond.#1]{(#1)}}

\newcommand{\cB}{\mathcal{B}}

\newcommand{\cH}{\mathcal{H}}

\newcommand{\bF}{\mathbb{F}}

\newtheorem{theorem}{Theorem}[section]

\theoremstyle{definition}
\newtheorem{definition}[theorem]{Definition}

\newtheorem{theoremx}{Theorem}

\numberwithin{equation}{section}

\author{Adam Dor-On}
\address{Department of Mathematics, University of Haifa, Mount Carmel, Haifa 3103301, Israel}
\email{adoron.math@gmail.com\vspace{-1ex}}

\author{Boyu Li}
\address{Department of Mathematical Sciences, New Mexico State University, Las Cruces, New Mexico, 88003, USA}
\email{boyuli@nmsu.edu}

\subjclass[2020]{46L05, 46L55, 43A07}

\keywords{Semigroup C*-algebra, Nica-amenable, amenable group}

\thanks{A. Dor-On was partially supported by an NSF / BSF grant DMS-2452324 / 2024734 (respectively), and a DFG Middle-Eastern collaboration project no. 529300231. A. Dor-On and B. Li were partially supported by an NSF-BSF grant DMS-2350543 / 2023695 (respectively).}

\title[A Nica-amenable monoid which fails to embed in an amenable group]{A Nica-amenable submonoid of a group which fails \\ to embed in an amenable group}

\begin{document}

\begin{abstract}
We construct a weakly quasilattice-ordered submonoid of a group whose full semigroup $\mathrm{C}^*$-algebra is nuclear, while the monoid does not embed into any amenable group. Hence, Nica amenability, as well as the nuclearity of its semigroup C*-algebra, do not characterize embeddability into an amenable group for group-embeddable monoids.
\end{abstract}

\maketitle

\section{Introduction}

Amenability plays a central role in Mathematics, and especially in the study of operator algebras associated with various group-related structures. A classical result in the theory states that the amenability of a group $G$ is characterized either by its full group C*-algebra $\textrm{C}^*(G)$ being nuclear, or by the canonical map $\lambda:\textrm{C}^*(G)\to \textrm{C}^*_r(G)$ being injective (see, for example, \cite[Theorem 2.6.8]{BrownOzawaBook}). 
On the other hand, in the case of semigroups, precise characterizations of different notions of amenability of the semigroup in terms of their semigroup C*-algebras are far more nuanced. For instance, Murphy \cite{Murphy1996_CommIso} proved that the universal C*-algebra generated by isometric representations of $\mathbb{N}^2$ is not nuclear, even though one would expect $\mathbb{N}^2$, being a rather straightforward abelian semigroup, to be amenable in any appropriate sense and therefore have a nuclear semigroup C*-algebra. This intriguing result motivated the development of semigroup C*-algebras, where it was eventually understood that additional relations on semigroup representations must be imposed. 

In \cite{Nica1992}, Nica first introduced the notion of \emph{Nica-covariant} relations for isometric representations of quasi-lattice ordered semigroups, and defined the semigroup C*-algebra to be the universal C*-algebra generated by such representations. Under these Nica-covariance conditions, the semigroup C*-algebra $\textrm{C}^*(\mathbb{N}^2)$ is indeed nuclear and is isomorphic to the reduced semigroup C*-algebra $\textrm{C}^*_r(\mathbb{N}^2)$, which is the one generated by the left regular representation of $\mathbb{N}^2$. Building on Nica's work, the theory of semigroup C*-algebras was extensively developed by multiple authors over several decades \cite{LacaRaeburn1996, CrispLaca2002, CDL2013, XLi2013_nuc, Starling2015, ABCD2021, LS2022}, to list only some. 

Prompted by the equivalence of amenability in the case of groups, a semigroup $P$ is called Nica-amenable if the canonical map from the full semigroup C*-algebra $\textrm{C}^*(P)$ to the reduced semigroup C*-algebra $\textrm{C}^*_r(P)$ is an isomorphism. Nica-amenability is closely related to the nuclearity of the semigroup C*-algebra. It was first speculated in \cite{LacaRaeburn1996} that Nica-amenable semigroups have nuclear semigroup C*-algebras, and this question remains open to this day. On the other hand, it is known that the converse holds, namely, that nuclearity of the universal semigroup C*-algebra implies that $P$ is Nica-amenable \cite[Theorem 5.6.44]{CELY2017} (see also \cite[Theorem 4.9]{LS2022}). 

Thus, it is natural to try to find a characterization of Nica-amenability of semigroups. One surprising example of a Nica-amenable semigroup is the free semigroup $\bF_k^+$, which, despite its natural overgroup being non-amenable, has a nuclear semigroup C*-algebra $\textrm{C}^*(\mathbb{F}_k^+)$, and therefore $\bF_k^+$ is Nica-amenable. In fact, this phenomenon goes deeper than initially expected, since it turns out that the free monoid $\mathbb{F}_k^+$ actually embeds inside an amenable group (for example, $\mathbb{F}_k/\mathbb{F}_k''$  \cite{Hoc69}). In fact, in \cite[Theorem 5.6.44]{CELY2017} (see also \cite[Corollary 4.10]{LS2022}) it was shown that when a semigroup $P$ embeds inside an amenable group, then $\textrm{C}^*(P)$ is nuclear and $P$ is Nica-amenable. The question of whether there exists a submonoid of a group whose semigroup C*-algebra is nuclear but does not embed in an amenable group, first posed in \cite[Section 5.11]{CELY2017}, has been an interesting open question ever since. 

Many known classes of Nica-amenable semigroups can be shown to embed inside amenable groups. For a quasilattice-ordered semigroup $(G,P)$ with a semilattice, Crisp and Laca \cite{CrispLaca2002} used a technique of Nica \cite{Nica1992} to prove that $P$ being Nica-amenable implies that $G$ is an amenable group.  For the right LCM semigroup $P_T$ constructed from a graph of monoids, it is shown in \cite[Remark 7.6]{ChenLi2023} that the reduced semigroup C*-algebra $\textrm{C}^*_r(P_T)$ is nuclear if and only if $P_T$ can be embedded inside an amenable group. One potential class of examples comes from Artin monoids. The Nica-amenability of Artin monoids was fully characterized in \cite{LL2020}. That is, an Artin monoid is Nica-amenable if and only if it is right-angled. This led to a question posed by X. Li on whether right-angled Artin monoids can be embedded inside amenable groups. We show that all right-angled Artin monoids can actually be embedded inside amenable groups. This answers a question posed by X. Li during a CARMA and AMSI workshop in 2017. 

\begin{theoremx}
Let $A_{\Gamma}^+$ be a right-angled Artin monoid associated to a finite undirected graph $\Gamma$. Then $A_{\Gamma}^+$ can be embedded injectively as a submonoid of an amenable group.
\end{theoremx}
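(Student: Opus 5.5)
The plan is to realize $A_\Gamma^+$ inside a metabelian group, in analogy with the embedding of $\bF_k^+$ into $\bF_k/\bF_k''$ and with the classical embedding of the free monoid into the affine group $\mathbb{Z}[1/2]\rtimes\mathbb{Z}$ via binary expansions. Metabelian groups are solvable, hence amenable, so everything reduces to constructing a suitable Magnus-type representation and proving that it is injective on the positive monoid.

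Let $V$ be the vertex set of $\Gamma$, let $R=\mathbb{Z}[t_v^{\pm1}: v\in V]$, and let $M$ be the $R$-module generated by symbols $e_v$ ($v\in V$) subject to the relations $(t_u-1)e_v=(t_v-1)e_u$ for every edge $\{u,v\}$ of $\Gamma$. Let $T\cong\mathbb{Z}^V$ be the group of monomials $t^a=\prod_v t_v^{a_v}$, acting on $M$ by multiplication, and form the group $H=M\rtimes T$. Since $H$ is an extension of an abelian group by an abelian group, it is metabelian and therefore amenable. Define $\phi(v)=(e_v,t_v)$. For an edge $\{u,v\}$ one computes $\phi(u)\phi(v)=(e_u+t_ue_v,\,t_ut_v)$ and $\phi(v)\phi(u)=(e_v+t_ve_u,\,t_ut_v)$. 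These agree precisely because of the defining relation $(t_u-1)e_v=(t_v-1)e_u$. Hence $\phi$ extends to a homomorphism $A_\Gamma\to H$. On a positive word $w=v_1\cdots v_n$ it is given by
\[
\phi(w)=\Bigl(\sum_{i=1}^n t^{\deg(v_1\cdots v_{i-1})}e_{v_i},\; t^{\deg w}\Bigr),
\]
a Fox-derivative type formula.

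The main step, and the expected obstacle, is injectivity of $\phi$ on $A_\Gamma^+$. I would argue by induction on word length. The second coordinate recovers the multidegree $\deg w$, and in particular the length. It then suffices to show that the first coordinate $m(w)$ determines the set of \emph{left divisors} $v$ of $w$, i.e.\ those letters $v$ with $w=vw'$ in $A_\Gamma^+$. Once a left divisor $v$ is known, the identity $m(w)=e_v+t_v\,m(w')$ and the fact that $t_v$ is a unit show that $m(w')$ is determined, so induction applies. Combined with the standard fact that $A_\Gamma^+$ is cancellative, this finishes the argument.

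To detect left divisors, I would use specialization homomorphisms out of $M$. For a vertex $v$, set $t_u=1$ for all $u$ not adjacent to $v$ and $u\neq v$; this kills the commuting relations involving those letters in a controlled way. Then project onto the $e_v$-component, using a concrete model of $M$ as a submodule of a free module, for instance via $e_v\mapsto (t_v-1)f$ together with extra coordinates recording nonadjacency. The term coming from the first occurrence of $v$ should then carry a lowest-order monomial. That monomial has trivial exponents in the letters preceding that occurrence exactly when all those letters commute with $v$, that is, exactly when $v$ is a left divisor.

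Making this precise requires understanding $M$ well enough, specifically knowing that it is torsion-free with a computable normal form. I expect this to be the delicate part. If it becomes unwieldy, my fallback is induction on $|V|$. One writes $A_\Gamma^+$ via a vertex $v$ as an amalgam of $A_{\mathrm{st}(v)}^+$ and $A_{\Gamma\setminus v}^+$ over $A_{\mathrm{lk}(v)}^+$, and then shows that the metabelian targets can be glued using a wreath-product construction, which preserves amenability.
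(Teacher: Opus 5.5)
Your setup is sound. $H=M\rtimes T$ is metabelian, $\phi$ respects the commutation relations, and reducing injectivity to recognizing left divisors from $\phi(w)$, then inducting on length, is correct. However, that recognition step is the whole content of the theorem, and the mechanism you describe for it does not work.

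Setting $t_u=1$ for the vertices $u\neq v$ \emph{not} adjacent to $v$ erases exactly the letters that obstruct $v$ from being a left divisor. If $x$ is not adjacent to $v$, the word $xv$ has first-occurrence term $t_xe_v$, which becomes a trivial monomial even though $v$ is not a left divisor of $xv$. Moreover, under your specialization the monomial attached to the first $v$ is not an invariant of $m(w)$. For $u$ adjacent to $v$, the equal words $uv$ and $vu$ give $t_ue_v$ and $e_v$. These agree only through the relation $(t_u-1)e_v=(t_v-1)e_u$, which your specialization leaves intact, so there is no well-defined $e_v$-component to read the monomial from.

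Your proposed remedy, a model of $M$ as a torsion-free submodule of a free module, does not exist in general. Take the path with edges $\{a,b\}$ and $\{b,c\}$, so $A_\Gamma\cong\mathbb{F}_2\times\mathbb{Z}$. The element $m=(t_c-1)e_a-(t_a-1)e_c$ satisfies $(t_b-1)m=(t_c-1)(t_a-1)e_b-(t_a-1)(t_c-1)e_b=0$. Yet $m\neq 0$, because every element of the relation submodule has $e_a$-coordinate divisible by $t_b-1$. Your fallback via amalgams and wreath products comes with no argument that the glued map stays injective on the monoid.

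The approach can be repaired by specializing the other way. Let $\mathrm{lk}(v)$ be the set of neighbours of $v$ and $R_v=\mathbb{Z}[t_k^{\pm1}: k\notin\mathrm{lk}(v)]$. Send $t_u\mapsto 1$ for $u\in\mathrm{lk}(v)$, and define $\theta_v\colon M\to R_v$ by $e_v\mapsto 1$ and $e_k\mapsto 0$ for $k\neq v$. Every edge relation maps to $0$: edges missing $v$ trivially, and edges $\{u,v\}$ because $t_u-1\mapsto 0$.

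Then $\theta_v(m(w))$ is a sum of monomials with coefficient $+1$, one for each occurrence of $v$, where the $j$-th occurrence has $t_v$-exponent $j-1$. Hence its constant term is $1$ exactly when every letter before the first $v$ is adjacent to $v$, and $0$ otherwise. Because $\theta_v(m(w))$ does not depend on the chosen word, this says precisely that $v$ is a left divisor. That closes your induction without any torsion-freeness.

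For comparison, the paper avoids modules entirely. It embeds $A_\Gamma^+$ into $\mathbb{N}^n\times(\mathbb{F}_2^+)^m$ using the letter counts $\pi_i$ and the deletion maps $\pi_{i,j}$ onto $\langle e_i,e_j\rangle^+$ for non-edges $ij$. It proves injectivity by the same initial-vertex induction: a non-adjacent $e_i$ preceding $e_j$ shows up as the first letter of $\pi_{i,j}$. It then cites $\mathbb{F}_2^+\hookrightarrow\mathbb{F}_2/\mathbb{F}_2''$. Your repaired route gives one explicit metabelian target and is self-contained, at the cost of the module bookkeeping.
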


As a result, an Artin monoid can be embedded inside an amenable group if and only if it is Nica-amenable. The combined evidence seems to indicate that Nica-amenable submonoids of groups will embed in amenable groups. However, in this paper, we find a group-embeddable weakly quasilattice-ordered monoid $P$ with a nuclear semigroup C*-algebra which cannot be embedded inside an amenable group. 

\begin{theoremx}
There exists a weakly quasilattice-ordered submonoid $P$ of a group $G$ whose semigroup C*-algebra $\textrm{C}^*(P)$ is nuclear, while $P$ does not embed as a submonoid of an amenable group.
\end{theoremx}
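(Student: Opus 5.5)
The plan is to build $P$ so that two requirements pull in different directions. First, the order-theoretic structure of $P$, meaning its constructible right ideals and the partial action of its enveloping group on their spectrum, should look like that of a free monoid, so that $\mathrm{C}^*(P)$ is nuclear. Second, $P$ should carry relations that force a nonabelian free group inside any group into which $P$ embeds injectively.

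The mechanism for the second requirement works as follows. If $\varphi\colon P\to H$ is an injective homomorphism into a group, consider elements of the form $\varphi(p)\varphi(q)^{-1}$. A family of relations in $P$, for example $pq=p'q'$ together with commutation relations $qr=rq$, turns such a quotient map into a homomorphism $\psi\colon F\to H$ from a free group $F$. Injectivity of $\varphi$ on $P$ then forces $\psi$ to be injective, so $H$ is non-amenable. As a test case I would take $P=\{(g,n)\in \mathbb{F}_2\times\mathbb{Z}: n\ge 1\}\cup\{(e,0)\}$. Here $(g,1)(h,1)=(gh,1)(e,1)$ and $(e,1)$ commutes with every $(h,1)$, so $g\mapsto \varphi(g,1)\varphi(e,1)^{-1}$ is an injective homomorphism $\mathbb{F}_2\to H$.

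This test case already shows where the difficulty lies. In the left regular representation, $V_{(g,1)}^*V_{(h,1)}$ acts as $\lambda_{g^{-1}h}$ on $\ell^2(\{n\ge 1\})$. Hence $\mathrm{C}^*_r(\mathbb{F}_2)$ sits inside a corner of $\mathrm{C}^*_r(P)$, and $\mathrm{C}^*(P)$ is not nuclear. The lesson is that the free group forced in $H$ must not be realized by the operators $V_p^*V_q$. Equivalently, the partial action of the enveloping group $G=G(P)$ on the spectrum $\Omega_P$ of the semilattice of constructible ideals must be amenable, even though $G$ is not. The model for this is the free monoid $\mathbb{F}_2^+\subset\mathbb{F}_2$, whose boundary action is amenable.

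So the actual construction should take $P\subseteq G$ generated by free generators together with a few "twisting" relations of the form $ta=w_1t$, $tb=w_2t$, with $w_i$ positive words. The relations should be chosen so that the following four steps go through, which I would carry out in order.

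\textbf{(1) Normal forms.} Prove a normal form theorem for $P$, for instance via a rewriting system or an HNN/amalgam description of $G$. This gives that $P$ is left and right cancellative, embeds in $G$, and has trivial units.

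\textbf{(2) Weak quasilattice order.} From the normal forms, compute intersections $pP\cap qP$. Show that each is either empty or a finite union of principal right ideals, so that $(G,P)$ is weakly quasilattice-ordered. I expect this to fail to be a genuine quasilattice, just as in the test case above.

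\textbf{(3) Nuclearity.} Show that $\mathrm{C}^*(P)\cong \mathrm{C}_0(\Omega_P)\rtimes G$, the partial crossed product, is nuclear by proving that the partial action is amenable. I would construct approximately invariant means in the style of Nica and Crisp--Laca, in which a length or controlled-map argument replaces amenability of $G$. Alternatively, I would exhibit $\mathrm{C}^*(P)$ as an iterated extension of Cuntz--Toeplitz-type algebras, possibly twisted by crossed products with $\mathbb{Z}$, since such extensions preserve nuclearity.

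\textbf{(4) Non-embeddability.} Run the free-group argument above for an arbitrary injective $\varphi\colon P\to H$, using the normal forms to verify that $\psi$ is injective.

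I expect step (3) to be the main obstacle. The relations that force a free group in every embedding of $P$ tend also to create group-like elements $V_p^*V_q$ inside $\mathrm{C}^*(P)$, as the test case shows. Choosing the relations so that only $\varphi(p)\varphi(q)^{-1}$ with $pP\cap qP=\emptyset$ generate the free group is what should make the two requirements compatible. In that case $V_p^*V_q=0$, and the free group stays invisible to $\mathrm{C}^*(P)$.
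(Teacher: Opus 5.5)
Your proposal does not yet prove the statement, because it never produces the monoid. The theorem is a pure existence statement, so the example \emph{is} the proof. Your relations $ta=w_1t$, $tb=w_2t$ are left unspecified, and each of steps (1)--(4) depends on choosing them well. Step (3), which you yourself identify as the main obstacle, has no argument at all.

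Step (2) also aims at the wrong target. Weakly quasilattice-ordered means right LCM with $P\cap P^{-1}=\{e\}$, i.e.\ every $pP\cap qP$ is empty or a \emph{single} principal ideal $rP$. A finite union of principal right ideals only gives finite alignment. Your expectation that $P$ will fail to be a quasilattice ``just as in the test case'' contradicts what has to be shown. In that test case, $(g,1)P\cap(h,1)P=\{(k,n):n\ge 2\}$ is not even a finitely generated right ideal.

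The shape you propose is also unlikely to pass step (4). With a single twister $t$, every element of $P$ has the form $ut^k$ with $u\in\langle a,b\rangle^+\cong\mathbb{F}_2^+$. So the twisted part is a free monoid, whose image in a target group is not rigid. Since $\mathbb{F}_2''$ is fully invariant, $a\mapsto w_1$, $b\mapsto w_2$ descends to $\mathbb{F}_2/\mathbb{F}_2''$, into which $\mathbb{F}_2^+$ still embeds. Whenever the induced endomorphism is injective, $ut^k\mapsto \bar u t^k$ embeds $P$ in an ascending HNN extension of a metabelian group, which is amenable.

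The missing idea, which is the paper's construction, is to reverse the roles: twist a \emph{commutative} monoid by a \emph{free} monoid on $n\ge 2$ letters. Take $P=\mathbb{N}^{(I)}\rtimes_\alpha\mathbb{F}_n^+$, with $\alpha$ induced by a permutation action $\rho$ of $\mathbb{F}_n$ on $I$ whose image is non-amenable.

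Then $(x,u)P=(x+A_+)\times u\mathbb{F}_n^+$. Hence $P$ is right LCM with $(x,u)\vee(y,v)=(x\vee y,u\vee v)$, and its units are trivial. The map $\mu(x,g)=\ell(g)$ is a controlled map to $\mathbb{Z}$ whose kernel $\ker\mu\cap P=A_+\times\{e\}$ is abelian. The controlled-map theorem then gives nuclearity of $\mathrm{C}^*(P)$ without analyzing the partial action by hand.

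Non-embeddability comes from rigidity of the commutative part, not from forcing a free subgroup through fractions. An injective $\pi\colon P\to H$ extends injectively to the group completion $A=\mathbb{Z}^{(I)}$; call its image $B$. The elements $E_j=\pi(0,s_j)$ normalize $B$ and act on it as $\alpha_{s_j}$. So $L=\langle B,E_1,\dots,E_n\rangle\le H$ maps onto $\alpha(\mathbb{F}_n)\cong\rho(\mathbb{F}_n)$, which is non-amenable.

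This matches your closing heuristic. The letters $(0,s_i)$ have pairwise disjoint right ideals, so $V_{(0,s_i)}^*V_{(0,s_j)}=0$ for $i\neq j$. The non-amenable group appears only through conjugation on $B$, as a quotient of a subgroup of $H$.
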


As a result, Nica-amenability and the nuclearity of the semigroup C*-algebra cannot be characterized by embeddability into an amenable group. Our idea for constructing our example is inspired by two standard principles in the theory of group actions. The first is that an appropriate wreath product "enlarges" a boundary, even when the group is amenable. This goes back to the work of Kaimanovich and Vershik on boundary theory for random walks, where an amenable lamplight group was shown to have a non-trivial Poisson boundary \cite[Subsection 6.2]{KV1983_RandWalk}. The second principle for our example is the simple observation that amenability of an action by a discrete group on a space, which is equivalent to the coincidence of full and reduced crossed products (see \cite[Theorem 4.4.3]{BrownOzawaBook}), is not necessarily implemented via an amenable group (see for instance \cite[Theorem 5.3.15]{BrownOzawaBook}). This simple observation has led to various counterexamples in the literature, for instance to the recent counterexamples for the full Hao-Ng isomorphism problem \cite{DT2026}.

\vspace{6pt}

\textbf{Acknowledgments.} The first-named author is grateful to Ian Thompson for discussions on amenability of group actions, held at the University of Copenhagen in early July 2026.

\section{Preliminary}

A semigroup $P$ is a set with an associative multiplication. When $P$ has an identity element, we say that $P$ is a monoid, and we denote by $e\in P$ this identity element. Throughout this paper, we assume $P$ can be embedded inside a group $G$, so that $P$ is cancellative. The set of all invertible elements inside a monoid $P$ is denoted by $P^*$, which is also $P^*=P\cap P^{-1}\subset G$.

A monoid $P$ is called a right LCM monoid if for any $p,q\in P$ we have that $pP\cap qP$ is either $\emptyset$ or $rP$ for some $r\in P$. Notice that $r$ may not be unique when $P^*\neq\{e\}$ since $rP=sP$ for all $s\in rP^*$. When a right LCM monoid $P$ is contained in a group $G$ and satisfies $P^*=\{e\}$, we call the pair $(G,P)$ a weakly quasilattice ordered group \cite{ABCD2021}. Since the choice of $r$ is unique in a weakly quasilattice-ordered semigroup, we use the notation $r=p\vee q$. One can define a partial order on a weakly quasilattice-ordered group $(G,P)$, we may define a partial order on $G$ by setting $x\leq y$ whenever $x^{-1}y\in P$. Note that we often refer to $P$ as a weakly quasilattice-ordered semigroup without referencing the enveloping group $G$. 

Each semigroup $P$ is naturally associated with a left-regular representation on Hilbert space $\lambda: P\to\cB(\ell^2(P))$. More precisely, the Hilbert space $\ell^2(P)$ has the standard orthonormal basis $\{\delta_p\}_{p\in P}$, and $\lambda$ is defined by $\lambda(p)\delta_q=\delta_{pq}$. The reduced semigroup C*-algebra $C_r^*(P)$ is the C*-algebra generated by $\{\lambda(p): p\in P\}$. For the full semigroup C*-algebra of a right LCM monoid $P$, we say that an isometric representation $V:P\to\cB(\cH)$ is Nica-covariant if for all $p,q\in P$,
\[V(p)V(p)^*V(q)V(q)^* = \begin{cases}
    V(r)V(r)^*, &\text{ if } pP\cap qP=rP, \\
    0, &\text{ if }pP\cap qP=\emptyset.
\end{cases}\]
One may easily verify that $\lambda$ is an isometric Nica-covariant representation. The full semigroup C*-algebra $\textrm{C}^*(P)$ of a right LCM monoid $P$ is defined as the universal C*-algebra generated by isometric Nica-covariant representations. We say $P$ is Nica-amenable if $\lambda:\textrm{C}^*(P)\to C_r^*(P)$ is injective. Examples of Nica-amenable semigroups include abelian semigroups \cite{Nica1992}, right-angled Artin monoids \cite{CrispLaca2002}, and Baumslag-Solitar monoids \cite{ABCD2021}, whereas examples of non-Nica-amenable semigroups include non-right-angled Artin monoids \cite{CrispLaca2002, LL2020}.

A key technique in proving a semigroup is Nica-amenable is via the so-called ``controlled map''. The controlled map was first introduced in \cite{LacaRaeburn1996} to prove that the free product of abelian quasilattice-ordered semigroups is Nica-amenable. In \cite[Definition 3.6]{ABCD2021}, the notion of a controlled map is further relaxed. For the purpose of this paper, we adopt the following definition.

\begin{definition}\label{defn.control} Let $(G,P)$ and $(H,Q)$ be weakly quasilattice-ordered groups. Suppose that $\mu: G\to H$ is an order-preserving group homomorphism. We say $\mu$ is a controlled map if 
\begin{enumerate}
    \item For all $x,y\in P$ with $xP\cap yP\neq \emptyset$, we have $\mu(x)\vee \mu(y)=\mu(x\vee y)$. 
    \item For all $q\in Q$, there exists $\{s_\lambda\}\subset \mu^{-1}(q)\cap P$ such that $\mu^{-1}(q)\cap P$ can be written as a disjoint union 
    \[\mu^{-1}(q)\cap P = \bigsqcup_{\lambda} S_\lambda,\]
    where,
    \[S_\lambda =\{x\in \mu^{-1}(q)\cap P: x\geq s_\lambda\}.\]
\end{enumerate}
\end{definition}

We note that the collection $\{s_\lambda\}$ in the Definition~\ref{defn.control} must satisfy $s_\lambda P\cap s_\mu P=\emptyset$ for all $\mu\neq \nu$, since $\{S_\lambda\}$ are disjoint and $s_\lambda \in S_\lambda$. The original definition of a controlled map in \cite{LacaRaeburn1996} is a special case where $S_\lambda=\{s_\lambda\}$. The controlled map defined in \cite{ABCD2021} is more general than our version here, where each $S_\lambda$ is an increasing union of the form $\{x\in \mu^{-1}(q)\cap P: x\in s_{\lambda,n}\}$. To prove $\textrm{C}^*(P)$ is nuclear, it suffices to find a controlled map $\mu:P\to Q$ such that $Q$ is abelian and $\textrm{C}^*(\ker(\mu)\cap P)$ is nuclear \cite{LacaRaeburn1996, ABCD2021}.

A typical example of a weakly quasilattice-ordered semigroup is the free monoid $\bF_n^+$, generated by $n$ free generators $\{e_1,\dots, e_n\}$. For $x,y\in \bF_n^+$, $x\bF_n^+\cap y\bF_n^+\neq \emptyset$ when $x$ and $y$ are prefix-comparable, which means either $x=yy'$ or $y=xx'$ for some $x', y'\in\bF_n^+$. 


\section{Right-angled Artin monoids}

In this section, we prove that right-angled Artin monoids can be embedded inside amenable groups. Let us recall some basic facts about right-angled Artin monoids. Let $\Gamma=(V,E)$ be an undirected graph with vertices $V=\{1,2,\dots,n\}$ and edges $E$. The right-angled Artin monoid is defined as 
\[
A_\Gamma^+=\langle e_1,\dots, e_n: e_ie_j=e_je_i, \text{ for all } ij\in E\rangle.
\]
In other words, each vertex $i$ corresponds to a generator $e_i$ and the generators $e_i$ and $e_j$ commute if and only if $ij$ is an edge of $\Gamma$. Right-angled Artin monoids form an important class of right LCM monoids in the study of semigroup C*-algebras \cite{CrispLaca2002, CrispLaca2007, Li2017, LL2020, BLi2026}.

Each element $w\in A_\Gamma^+$ can be written as $w=w_1w_2\cdots w_n$ where each $w_i\in\{e_1,\dots, e_n\}$. Here, this expression is not unique since one can replace $w_iw_{i+1}$ by $w_{i+1}w_i$ whenever their corresponding vertices are the same or are adjacent. However, since the relations in $A_\Gamma^+$ are homogeneous, the total number of syllables used in $w$ is fixed, which is called the length of $w$, denoted by $\ell(w)$. 

Recall that a vertex $i$ is an initial vertex of $w$ if $w=e_i w'$. For an element $w=w_1\cdots w_k e_i w'$ where $w_t\neq e_i$ for all $1\leq t\leq k$, $i$ is an initial vertex if and only if each $w_t$ commutes with $e_i$ for all $1\leq t\leq k$. 

\begin{theorem} 
Let $A_{\Gamma}^+$ be a right-angled Artin monoid associated to a finite undirected graph $\Gamma$. Then $A_{\Gamma}^+$ can be embedded injectively as a submonoid of an amenable group.
\end{theorem}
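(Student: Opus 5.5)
We plan to embed $A_\Gamma^+$ into a solvable group built as an iterated wreath-product-type construction. The group is solvable, hence amenable. The model is the free case: $\bF_n^+$ embeds in the metabelian group $\bF_n/\bF_n''$ \cite{Hoc69}. For general $\Gamma$ we take the right-angled Artin group $A_\Gamma$ modulo its second derived subgroup, or more generally modulo a deep enough term $A_\Gamma^{(k)}$ of the derived series. We then show that the composite map $A_\Gamma^+\to A_\Gamma\to A_\Gamma/A_\Gamma^{(k)}$ is injective. Here $A_\Gamma^+\to A_\Gamma$ is injective, which is standard for right-angled Artin monoids.

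Since we only need injectivity on the positive monoid, we would avoid structure theory of $A_\Gamma/A_\Gamma''$. Instead we construct an explicit faithful action of $A_\Gamma^+$ through a metabelian group of Magnus type, and proceed in three steps.

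\begin{enumerate}
\item Let $\mathbb{Z}^n$ be the abelianization of $A_\Gamma$, with basis $t_1,\dots,t_n$. Let $M$ be a module over $R=\mathbb{Z}[t_1^{\pm1},\dots,t_n^{\pm1}]$, and set $G=M\rtimes\mathbb{Z}^n$, which is metabelian. The generators are sent to $e_i\mapsto (m_i,t_i)$. The relation $e_ie_j=e_je_i$ for $ij\in E$ holds exactly when $m_i+t_im_j=m_j+t_jm_i$, that is, $(1-t_j)m_i=(1-t_i)m_j$.
\item For $M$ we take the quotient of the free module $\bigoplus_i R\,f_i$ by the relations $(1-t_j)f_i-(1-t_i)f_j$ for $ij\in E$, and set $m_i=f_i$. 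This is the Fox-calculus (Magnus) description of $A_\Gamma/A_\Gamma''$. A positive word $w=w_1\cdots w_k$ then maps to $\bigl(\sum_s t^{\deg(w_1\cdots w_{s-1})} f_{i_s},\ t^{\deg w}\bigr)$, where $i_s$ is the index with $w_s=e_{i_s}$. Its first coordinate records the set of pairs (prefix-degree, letter) modulo the relations.
\item To prove injectivity, suppose $w$ and $w'$ have equal image. Then they have the same degree, so in particular the same length. We compare first letters: if $e_i$ is an initial vertex of $w$ but not of $w'$, we derive a contradiction by mapping $M$ to a simpler module. For instance, we specialize all $t_j$ with $j$ not adjacent to $i$ to separate the coefficient of $f_i$ at degree $0$. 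Once both words begin with $e_i$, we cancel $e_i$ in the group and induct on length.
\end{enumerate}

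The main obstacle is the contradiction in step 3. The coefficient of $f_i$ in $M$ is not free, because of the relations, so we must find a linear functional on $M$ that detects whether $e_i$ can be moved to the front. The first thing to try is a functional built from the characterization recalled above: $i$ is initial if and only if every letter before the first occurrence of $e_i$ commutes with $e_i$. Concretely, we would pass to the quotient that kills $f_j$ for every $j$ adjacent to $i$, and then set the variables $t_j$ for those $j$ equal to $1$. In that quotient the relations involving $f_i$ vanish, and $f_i$ generates a free summand over the Laurent ring in the remaining variables. The degree-$0$ coefficient of $f_i$ then equals $1$ exactly when the prefix before the first $e_i$ uses only letters adjacent to $i$; it must also be checked that no later $e_i$ contributes there, because non-adjacent letters carry nontrivial degree.

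If this functional fails to separate, the fallback is to pass to a higher derived quotient $A_\Gamma/A_\Gamma^{(k)}$ with $k$ larger, which is still solvable, and to argue inductively on the number of vertices. This induction would use a decomposition of $A_\Gamma$ as an amalgam over the link of a vertex, together with the fact that positive words embed compatibly in each step.
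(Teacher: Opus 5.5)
Your proposal is correct, and it takes a different route from the paper. The inductive skeleton is the same in both; what differs is the target group and the invariant used to detect an initial vertex.

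\textbf{The paper's route.} The paper builds no module. It maps $A_\Gamma^+$ into $\mathbb{N}^n\times(\bF_2^+)^m$ using the letter counts $\pi_i$ and the projections $\pi_{i,j}$ onto the free monoid on each non-adjacent pair. It shows that an initial vertex $j$ of $x$ is initial in $y$ by comparing the first letters of $\pi_{i,j}(x)$ and $\pi_{i,j}(y)$, then cancels and inducts on length. Finally it quotes $\bF_2^+\hookrightarrow\bF_2/\bF_2''$.

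\textbf{Your route.} You run the same induction inside the single metabelian group $M\rtimes\mathbb{Z}^n$. Your detector is the constant term of the $f_i$-coordinate after setting $t_j=1$ for $j\in\mathrm{lk}(i)$. This detector works as stated, so the fallback through higher derived quotients and amalgams is unnecessary.
\begin{itemize}
\item Since $i\notin\mathrm{lk}(i)$, the variable $t_i$ survives. Every occurrence of $e_i$ after the first therefore has a prefix of positive $t_i$-degree and contributes nothing to the constant term.
\item The first occurrence contributes $1$ exactly when all earlier letters lie in $\mathrm{lk}(i)$.
\item Compute with an expression starting with $e_i$ when $i$ is initial. When it is not, use only the trivial direction of the characterization. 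Either way, the constant term is $1$ if $i$ is initial and $0$ otherwise.
\end{itemize}
As a bonus, this invariant also proves the nontrivial direction of the characterization.

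\textbf{Two small corrections.} First, you can streamline the functional. The map $M\to\mathbb{Z}[t_k^{\pm1}:k\notin\mathrm{lk}(i)]$ with $f_i\mapsto 1$ and $f_j\mapsto 0$ for every $j\neq i$ is already well defined. Each relation $(1-t_j)f_i-(1-t_i)f_j$ with $ij\in E$ goes to $1-1=0$, so no free-summand argument is needed. Second, your earlier phrase ``specialize all $t_j$ with $j$ not adjacent to $i$'' has the roles reversed. It is the adjacent variables that must go to $1$; specializing the non-adjacent ones would destroy the detector.

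\textbf{What each route buys.} Your argument is self-contained: for the edgeless graph it reproves the free case rather than citing it. Because your map extends to $A_\Gamma$ and lands in a metabelian group, it also shows explicitly that $A_\Gamma^+$ embeds in $A_\Gamma/A_\Gamma''$, the metabelianization of its own enveloping group. The paper's argument is purely combinatorial, avoids Fox calculus entirely, and reduces everything to the two-generator free monoid. Its embedding also factors through $A_\Gamma/A_\Gamma''$, since $\pi$ extends to a homomorphism of $A_\Gamma$ into $\mathbb{Z}^n\times(\bF_2/\bF_2'')^m$. So both proofs yield this sharper fact, but only yours makes it explicit.
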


\begin{proof} Suppose $\Gamma$ is defined on $n$ vertices. We first prove that $A_\Gamma^+$ can be embedded injectively inside the monoid $\mathbb{N}^n\times(\bF_2^{+})^m$, where $m=|\{(i,j): ij\notin E\}|$. 

For each $i$, define $\pi_i:A_\Gamma^+\to\mathbb{N}$ by first setting
\[\pi_i(e_k)=\begin{cases} 1, &\text{if } k=i, \\
0, &\text{otherwise},
\end{cases}\]
and extend $\pi_i$ to a monoid homomorphism. 

Similarly, for each $ij\notin E$, define $\pi_{i,j}:A_\Gamma^+\to\bF_2^+\cong\langle e_i, e_j\rangle$ by first setting
\[\pi_{i,j}(e_k)=\begin{cases} e_k, &\text{if } k=i \text{ or } j, \\
e, &\text{otherwise},
\end{cases}\]
and then extending $\pi_{i,j}$ to a monoid homomorphism. 

Define $\pi:A_\Gamma^+\to\mathbb{N}^n\times (\bF_2^{+})^m$ by
\[
\pi=\left(\prod_{i=1}^n \pi_i\right)\times\left(\prod_{ij\notin E} \pi_{i,j}\right).
\]
We prove that $\pi$ is injective. Let $x\in A_\Gamma^+$, we first observe that
\[\pi_i(e_k x)=\begin{cases} 1+\pi_i(x), &\text{if } k=i, \\
\pi_i(x), &\text{otherwise},
\end{cases}\quad 
\pi_{i,j}(e_kx)=\begin{cases} e_k\pi_{i,j}(x), &\text{if } k=i \text{ or } j, \\
\pi_{i,j}(x), &\text{otherwise}.
\end{cases}\]
Therefore, if $\pi(e_kx)=\pi(e_ky)$, then $\pi(x)=\pi(y)$. 

Suppose $x,y\in A_\Gamma^+$ have $\pi(x)=\pi(y)$. Observe that
\[\ell(x)=\sum_{i=1}^n \pi_i(x).\]
Therefore, $\ell(x)=\ell(y)$. We now prove by induction on $\ell(x)=\ell(y)$. It is trivial when $\ell(x)=\ell(y)=0$, in which case, $x=y=e$. 

Suppose $j$ is an initial vertex of $x$ so that $x=e_j x'$ for some $x'\in A_\Gamma^+$. Since $\pi_j(y)=\pi_j(x)=1+\pi_j(x')\geq 1$, we know $e_j$ is included somewhere in any expressions of $y$. We claim $j$ must be an initial vertex for $y$. If not, then we can write $y$ as 
\[y=y_1y_2\dots y_k e_j y_{k+2}\dots y_m,\]
where each $y_s\in\{e_1,\dots, e_n\}$, none of $\{y_s\}_{s=1}^k$ is $e_j$, and there exists $1\leq t\leq k$ such that $y_t=e_i$ for some $i$ where $ij\notin E$. Fix such an $i$, and notice that $\pi_{i,j}(y)$ starts with $e_i$ because $e_i$ comes before $e_j$. However, $x=e_je'$ and $\pi_{i,j}(x)=e_j\pi_{i,j}(x')$ starts with $e_j$, which contradicts $\pi_{i,j}(x)=\pi_{i,j}(y)$. Therefore, $j$ must also be an initial vertex for $y$, and $y=e_jy'$ for some $y'\in A_\Gamma^+$. 

Now $\pi(e_j x') = \pi(x)=\pi(y)=\pi(e_j y')$, and thus by an earlier observation we have that $\pi(x')=\pi(y')$. This finishes the induction, and thus $\pi$ is injective.

Now, since the free monoid $\bF^+_2$ embeds injectively inside the solveable group $\bF_2/\bF_2''$, we see that the monoid $\mathbb{N}^n\times (\bF_2^{+})^m$ also embeds inside the amenable group $\mathbb{Z}^n\times (\bF_2/\bF_2'')^m$. Composing this embedding with $\pi$, we get that $A_\Gamma^+$ embeds inside an amenable group. 
\end{proof}

\section{An Example}

We now construct an example of a weakly quasilattice-ordered semigroup with a nuclear full semigroup C*-algebra, which cannot be embedded inside any amenable group. 

Let $\mathbb{F}_n=\mathbb{F}(s_1,\ldots,s_n)$ for $n\geq 2$, and let $\mathbb{F}_n^+$ denote the free monoid generated by $s_1,\ldots,s_n$. Let $I$ be a countable set and suppose that $\rho\colon \mathbb{F}_n\longrightarrow \operatorname{Sym}(I)$ is a permutation representation such that $\rho(\mathbb{F}_n)$ is non-amenable. This can be obtained, for instance, by taking $I = \mathbb{F}_n$ and having $\mathbb{F}_n$ act by left multiplication. Consider the abelian group and its positive cone
$$
    A=\mathbb{Z}^{(I)}
    =
    \bigoplus_{i\in I}\mathbb{Z}\delta_i,
    \qquad
    A_+=\mathbb{N}^{(I)}
    =
    \bigoplus_{i\in I}\mathbb{N}\delta_i.
$$
The permutation representation $\rho$ induces an automorphic action $\alpha\colon \mathbb{F}_n\curvearrowright A$ determined by $\alpha_g(\delta_i)=\delta_{\rho(g)i}$ for $g\in\mathbb{F}_n,\ i\in I$. Notice also that $\alpha_g(A_+)=A_+$ for every $g\in\mathbb{F}_n$. We form the semidirect-product group $G=A\rtimes_\alpha\mathbb{F}_n$ and its semidirect-product submonoid $P=A_+\rtimes_\alpha\mathbb{F}_n^+$. Here, the multiplication is given by $(x,u)(y,v) =\bigl(x+\alpha_u(y),uv\bigr)$ for $x,y\in A$ and $u,v\in\mathbb{F}_n$.

\begin{theorem}
\label{thm:example}
Let $I$ and $\rho$ be as above, and let $P=\mathbb{N}^{(I)}\rtimes_\alpha\mathbb{F}_n^+$. Then $P$ is a countable, weakly quasilattice-ordered, group-embeddable semigroup so that the full semigroup $\mathrm{C}^*$-algebra $\textrm{C}^*(P)$ is nuclear (and therefore $P$ is Nica amenable) while there is no injective monoid homomorphism from $P$ into an amenable group.
\end{theorem}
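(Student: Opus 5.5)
The plan is to verify the four properties in turn: countability and group-embeddability, weak quasilattice order, nuclearity via a controlled map, and non-embeddability via an amenability-of-quotients argument. Countability is immediate, since $I$ is countable and $\mathbb{N}^{(I)}$, $\mathbb{F}_n^+$ are countable. By construction $P\subset G=A\rtimes_\alpha\mathbb{F}_n$, so it is group-embeddable.

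\emph{Weak quasilattice order.} Fix $(x,u),(y,v)\in P$. Since $\alpha_u(A_+)=A_+$, the principal right ideal is
\[(x,u)P=\{(x+a,uw): a\in A_+,\ w\in\mathbb{F}_n^+\}.\]
Hence $(x,u)P\cap(y,v)P\neq\emptyset$ if and only if $u,v$ are prefix-comparable in $\mathbb{F}_n^+$. In that case, say $v=uu'$, the intersection is $(x\vee y, v)P$, where $x\vee y$ is the coordinatewise maximum in $\mathbb{N}^{(I)}$. Moreover $P^*=\{e\}$, because an invertible $(x,u)$ forces $u\in(\mathbb{F}_n^+)^*=\{e\}$ and $x\in A_+\cap(-A_+)=\{0\}$. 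So $(G,P)$ is weakly quasilattice ordered, with
\[(x,u)\vee(y,v)=(x\vee y,\ u\vee v).\]

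\emph{Nuclearity.} Let $\mu\colon G\to\mathbb{F}_n$ be the quotient map $(x,u)\mapsto u$, and take $(H,Q)=(\mathbb{F}_n,\mathbb{F}_n^+)$.
\begin{enumerate}
\item $\mu$ is order preserving.
\item Condition (1) of Definition~\ref{defn.control} holds by the formula for $\vee$ above.
\item For condition (2), $\mu^{-1}(q)\cap P=\{(x,q): x\in A_+\}$. This is a single set $S$ with minimum $s=(0,q)$, since $(x,q)=(0,q)(\alpha_{q}^{-1}(x),e)$.
\end{enumerate}
Thus $\mu$ is a controlled map whose kernel part is $\ker\mu\cap P=A_+\cong\mathbb{N}^{(I)}$. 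This is abelian, and $\textrm{C}^*(A_+)$ is nuclear: it is an inductive limit of tensor products of Toeplitz algebras.

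The main obstacle is that $Q=\mathbb{F}_n^+$ is not abelian, whereas the remark after Definition~\ref{defn.control} is stated for abelian $Q$. I would first check that the proofs in \cite{LacaRaeburn1996, ABCD2021} only use that $\textrm{C}^*(Q)$ is nuclear (or $Q$ is Nica amenable with the Fell bundle over $H$ amenable). Here that holds since $\mathbb{F}_n^+$ embeds in the amenable group $\mathbb{F}_n/\mathbb{F}_n''$, so \cite[Theorem 5.6.44]{CELY2017} applies to $Q$. Then $\textrm{C}^*(P)$ is nuclear as an iterated (Fell bundle / semigroup crossed product) construction, namely $\textrm{C}^*(A_+)\rtimes\mathbb{F}_n^+$ with nuclear coefficients over an amenably-controlled base.

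If this extension is not available, I would instead compose with a further controlled map $\mathbb{F}_n\to\mathbb{F}_n/\mathbb{F}_n''$, or argue directly via the coaction of $\mathbb{F}_n$ and the conditional expectation onto the fixed-point algebra. That fixed-point algebra is an inductive limit of algebras built from $\textrm{C}^*(A_+)$ and matrix units. Nica amenability then follows from nuclearity by \cite[Theorem 5.6.44]{CELY2017}.

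\emph{Non-embeddability.} Suppose $\varphi\colon P\to K$ is an injective monoid homomorphism into an amenable group $K$. Set $t_j=\varphi(0,s_j)\in K$ and $d_i=\varphi(\delta_i,e)$; the $d_i$ are pairwise distinct by injectivity. In $P$ we have
\[(0,s_j)(\delta_i,e)=(\delta_{\rho(s_j)i},e)(0,s_j),\]
so $t_jd_it_j^{-1}=d_{\rho(s_j)i}$. Since $\rho(s_j)$ is a bijection of $I$, conjugation by $t_j^{\pm1}$ preserves $X=\{d_i:i\in I\}$. Hence the subgroup $L=\langle t_1,\dots,t_n\rangle\le K$ acts on $X$ by conjugation. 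Identifying $X$ with $I$ via $d_i\leftrightarrow i$, the homomorphism $\mathbb{F}_n\to L\to\operatorname{Sym}(X)$ sending $s_j\mapsto t_j$ agrees with $\rho$ on the generators, hence equals $\rho$. Therefore $\rho(\mathbb{F}_n)$ is a quotient of $L$. But $L$ is a subgroup of the amenable group $K$, so $L$ and hence $\rho(\mathbb{F}_n)$ are amenable, contradicting the choice of $\rho$.
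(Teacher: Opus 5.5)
\textbf{Gap in the nuclearity step.} Your main route for nuclearity does not work as set up. For $\mu\colon G\to\mathbb{F}_n$, $(x,u)\mapsto u$, your checks of the controlled-map conditions are fine. The problem is the next step: \cite[Theorem~4.3]{ABCD2021}, like the Laca--Raeburn argument before it, requires the target group to be amenable.

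This is not a technicality you can expect to remove by ``checking that the proofs only use that $\mathrm{C}^*(Q)$ is nuclear''. Those proofs use the coaction of the target group on $\mathrm{C}^*(P)$. Amenability of that group is what lets one pass from the fixed-point algebra to the whole algebra, via faithfulness of the conditional expectation or Quigg's nuclearity criterion. For a non-amenable group this passage fails: the canonical coaction of $\mathbb{F}_n$ on $\mathrm{C}^*(\mathbb{F}_n)$ has fixed-point algebra $\mathbb{C}$, yet $\mathrm{C}^*(\mathbb{F}_n)$ is not nuclear. Your other fallback, a direct argument with the coaction of $\mathbb{F}_n$, hits the same obstruction. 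It would only work if you separately proved the associated Fell bundle over $\mathbb{F}_n$ is amenable, which you have not done. The phrase ``nuclear coefficients over an amenably-controlled base'' does not supply this.

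\textbf{How to repair it.} Push down to an amenable group. Your first fallback does this, and it works once you check it. Since $\mathbb{F}_n^+$ embeds in $\mathbb{F}_n/\mathbb{F}_n''$, its image $Q'$ is a free monoid. Hence $(\mathbb{F}_n/\mathbb{F}_n'',Q')$ is weakly quasilattice ordered with the same joins. For the composite $\nu\colon G\to\mathbb{F}_n/\mathbb{F}_n''$:
\begin{enumerate}
\item the fibre $\nu^{-1}(q)\cap P$ is $A_+\times\{w\}$, where $w$ is the unique word mapping to $q$, and it has minimum $(0,w)$;
\item the kernel part is still $\ker\nu\cap P=A_+$.
\end{enumerate}
The paper takes a cheaper option: it composes your $\mu$ with the length homomorphism $\ell\colon\mathbb{F}_n\to\mathbb{Z}$. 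The fibre over $k$ is then no longer a single principal set. It is the disjoint union, over words $|w|=k$, of the sets $\{p\in\mu^{-1}(k)\cap P: p\geq(0,w)\}$. This is exactly why the relaxed condition (2) of Definition~\ref{defn.control} is needed. With target $\mathbb{Z}$ and kernel part $A_+$, \cite[Theorem~4.3]{ABCD2021} applies directly. You should make one of these the actual argument rather than a contingency.

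\textbf{The rest is correct.} Your treatment of the weak quasilattice order matches the paper. Your non-embeddability argument is correct and is a leaner variant of the paper's. The paper extends $\pi|_{A_+}$ to an injective homomorphism on the group completion $A$ and computes $\operatorname{Ad}_B(L)\subseteq\operatorname{Aut}(B)\cong\operatorname{Aut}(A)$. You instead let $L=\langle t_1,\dots,t_n\rangle$ act by conjugation on the invariant set $\{d_i\}$ and read off $\rho$ directly. Your version needs injectivity only on the elements $(\delta_i,e)$. It also needs only non-amenability of $\rho(\mathbb{F}_n)$, not the injectivity of $\rho$ that the paper's last line invokes.
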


\begin{proof}
Since $P$ is a submonoid of the group $G$, it is cancellative and
group-embeddable. We first verify that $(0,e)\in P$ is the only invertible element in $P$. Suppose that
$(x,u),(y,v)\in P$ satisfy
\[
    (x,u)(y,v)=(0,e).
\]
It follows that $uv=e$ in $\mathbb{F}_n^+$, so $u=v=e$. Hence $x+y=0$ with $x,y\in A_+$, which
forces $x=y=0$. Therefore, $P\cap P^{-1}=\{e\}$. 

We next show that $P$ is right LCM. Let $(x,u)\in P$. Since
$\alpha_u(A_+)=A_+$, we have that
$$ 
    (x,u)P = \left\{
        \bigl(x+\alpha_u(z),uw\bigr):
        z\in A_+,\ w\in\mathbb{F}_n^+
    \right\} \notag = (x+A_+)\times u\mathbb{F}_n^+.
$$
Let $p=(x,u)$ and $q=(y,v)$ be elements of $P$. 
If the words $u,v\in\bF_n^+$ are not prefix-comparable, then $u\mathbb{F}_n^+\cap v\mathbb{F}_n^+=\varnothing$, and consequently $pP\cap qP=\varnothing$. Suppose now, without loss of generality, that $v=uw$ for some $w\in\mathbb{F}_n^+$. Then $u\mathbb{F}_n^+\cap v\mathbb{F}_n^+ =v\mathbb{F}_n^+$. For $x,y\in A_+$, we have that $x\vee y$ is their coordinatewise maximum. We then have $(x+A_+)\cap(y+A_+) =(x\vee y)+A_+$. It now follows from the description of $(x,u)P$ above that
\begin{align*}
    pP\cap qP
    &=
    \bigl((x\vee y)+A_+\bigr)
        \times v\mathbb{F}_n^+\\
    &=
    (x\vee y,v)P.
\end{align*}
Thus, $P$ is a right LCM monoid. Since $P\cap P^{-1}=\{e\}$ we get that $(G,P)$ is a weakly quasi-lattice ordered group, and, whenever $p$ and $q$ have a common upper bound, we have that $p\vee q = (x\vee y,u\vee v)$, where $u\vee v$ is the longer of the two prefix-comparable words $u$ and $v$.

We now prove Nica amenability. Let $\ell\colon\mathbb{F}_n\longrightarrow\mathbb{Z}$ be the length homomorphism determined by $\ell(s_j)=1$ for $1\leq j\leq n$. In particular, $\ell(w)=|w|$ for $w\in\mathbb{F}_n^+$. Define $\mu\colon G\longrightarrow\mathbb{Z}$ by setting $\mu(x,g)=\ell(g)$. We claim that $\mu\colon G \longrightarrow \mathbb{Z}$ is a controlled map in the sense of Definition~\ref{defn.control}. It is clear that $\mu(P)\subseteq\mathbb{N}$, and hence $\mu$ is
order-preserving. Suppose that $p=(x,u)$ and $q=(y,v)$ have a common upper bound. Then $u$ and $v$ are prefix-comparable. If without loss of generality $v=uw$, then $p\vee q=(x\vee y,v)$, and hence
$$
    \mu(p\vee q)
    =
    |v|
    =
    \max\{|u|,|v|\}
    =
    \mu(p)\vee\mu(q).
$$

For $k\in\mathbb{N}$ and for each $w\in \bF_n^+$ with $|w|=k$ take $s_w=(0,w)$. Notice that for all $(x,w)\in\mu^{-1}(k)\cap P$ we have $|w|=\mu(x,w)=k$ and $(x,w)=(0,w)\cdot(\alpha_w^{-1}(x),e)\geq s_w$. Therefore, 
\[\mu^{-1}(k)\cap P=\bigsqcup_{|w|=k} \{x\in \mu^{-1}(k)\cap P: x\geq s_w\}.\]

On the other hand, when $p$ and $q$ do not have a common upper bound, this occurs precisely when $v$ and $u$ are not prefix comparable. Consequently, by the description of $(x,u)P$, it follows that $p P\cap qP \subseteq A_+ \times (v\mathbb{F}_n^+ \cap u \mathbb{F}_n^+) = \varnothing$. This proves that $\mu$ is controlled.

We have $\ker\mu\cap P=A_+\times\{e\}$, so that the subgroup of $G$ generated by $\ker\mu\cap P$ is $\langle\ker\mu\cap P\rangle  = A\times\{e\}$, which is abelian and hence amenable. It follows from \cite[Corollary~4.4]{ABCD2021} that $\textrm{C}^*(\ker\mu\cap P)$ is nuclear. Since the target group $\mathbb{Z}$ is amenable,
\cite[Theorem~4.3]{ABCD2021} now implies that $\textrm{C}^*(P)$ is nuclear and
that $P$ is Nica-amenable.

It remains to show that $P$ cannot embed into an amenable group.
Suppose, toward a contradiction, that $H$ is an amenable group and
that $\pi\colon P\longrightarrow H$ is an injective monoid homomorphism.

Let $\pi_0\colon A_+\longrightarrow H$ be given by $\pi_0(x)=\pi(x,e)$. Since $A_+$ is commutative, its image under $\pi_0$ is a commuting
family. Therefore, $\pi_0$ extends uniquely to a group homomorphism $\widetilde{\pi}_0\colon A\longrightarrow H$ given by $\widetilde{\pi}_0(x-y) =   \pi_0(x)\pi_0(y)^{-1}$ for $x,y\in A_+$. This map is well-defined. Indeed, if $x-y=x'-y'$, then $x+y'=x'+y$ inside $A_+$, and hence $\pi_0(x)\pi_0(y') = \pi_0(x')\pi_0(y)$. Since the elements in $\pi_0(A_+)$ commute, this gives $\pi_0(x)\pi_0(y)^{-1}  = \pi_0(x')\pi_0(y')^{-1}$. Moreover, $\widetilde{\pi}_0$ is injective. Indeed, if $\widetilde{\pi}_0(x-y)=e$, then $\pi_0(x)=\pi_0(y)$. The injectivity of $\pi$ gives $x=y$, and therefore $x-y=0$.

Denote $B=\widetilde{\pi}_0(A)\leq H$, and, for $1\leq j\leq n$, define $E_j=\pi(0,e_j)$. For every $x\in A_+$, the multiplication in $P$ gives $(0,e_j)(x,e) = \bigl(\alpha_{e_j}(x),e\bigr)(0,e_j)$. Applying $\pi$, we obtain $E_j\pi_0(x)E_j^{-1} = \pi_0\bigl(\alpha_{e_j}(x)\bigr)$. By passage to the group completion, it follows that $E_j\widetilde{\pi}_0(a)E_j^{-1} = \widetilde{\pi}_0\bigl(\alpha_{e_j}(a)\bigr)$ for all $a\in A$. Thus, every $E_j$ normalizes $B$.

Let $L=\langle B,E_1,\ldots,E_n\rangle\leq H$. Then $B$ is a normal abelian subgroup of $L$. Conjugation on $B$ induces a group homomorphism $\operatorname{Ad}_B\colon L\longrightarrow\operatorname{Aut}(B)$ given by $\operatorname{Ad}_B(h)(b)=hbh^{-1}$. Using the isomorphism $\widetilde{\pi}_0\colon A\longrightarrow B$, we may identify $\operatorname{Aut}(B)$ with $\operatorname{Aut}(A)$.
Under this identification, we have that $\operatorname{Ad}_B(E_j)=\alpha_{e_j}$ for all  $1\leq j\leq n$. Since $B$ is abelian, conjugation by every element of $B$ acts
trivially on $B$. It follows that
$$
    \operatorname{Ad}_B(L)
    =
    \langle\alpha_{e_1},\ldots,\alpha_{e_n}\rangle
    =
    \alpha(\mathbb{F}_n).
$$
The natural homomorphism $\operatorname{Sym}(I)\longrightarrow\operatorname{Aut}(A)$ given by $\sigma\longmapsto
    \bigl(\delta_i\mapsto\delta_{\sigma(i)}\bigr)$ is injective. Therefore, $\alpha(\mathbb{F}_n)\cong\rho(\mathbb{F}_n)$, and since $\rho$ is injective, we see that $\operatorname{Ad}_B(L)\cong \alpha(\mathbb{F}_n) \cong \rho(\mathbb{F}_n) \cong \mathbb{F}_n$ is not amenable.

On the other hand, $L$ is a subgroup of the amenable group $H$, and
is therefore amenable. Every homomorphic image of an amenable group
is amenable, so $\operatorname{Ad}_B(L)$ must be amenable. This is a
contradiction. We conclude that $P$ does not embed into any amenable
group.
\end{proof}


\def\lfhook#1{\setbox0=\hbox{#1}{\ooalign{\hidewidth
  \lower1.5ex\hbox{'}\hidewidth\crcr\unhbox0}}}

\end{document}